\setlist[enumerate]{labelsep=*, leftmargin=1.5pc}
\setlist[enumerate]{label=\normalfont(\roman*), ref=\roman*}
\theoremstyle{plain}
\newtheorem{thm}{Theorem}[]
\newtheorem{pro}[thm]{Proposition}
\newtheorem{lem}[thm]{Lemma}
\theoremstyle{definition}
\newtheorem{dfn}[thm]{Definition}
\newtheorem{rem}[thm]{Remark}
\newtheorem{eg}[thm]{Example}
\newtheorem{algorithm}[thm]{Algorithm}
\DeclareMathOperator{\Conv}{Conv}
\DeclareMathOperator{\coeff}{coeff}
\DeclareMathOperator{\Hom}{Hom}
\DeclareMathOperator{\Pic}{Pic}
\DeclareMathOperator{\Newt}{Newt}
\newcommand{\cA}{\mathcal{A}}
\newcommand{\cO}{\mathcal{O}}
\newcommand{\QQ}{{\mathbb{Q}}}
\newcommand{\PP}{{\mathbb{P}}}
\newcommand{\XX}{{\mathbb{X}}}
\newcommand{\ZZ}{{\mathbb{Z}}}
\renewcommand{\AA}{{\mathbb{A}}}
\newcommand{\CC}{\mathbb{C}}
\newcommand{\LL}{\mathbb{L}}
\newcommand{\cM}{\mathcal{M}}
\newcommand{\Cstar}{\CC^\times}
\renewcommand{\emptyset}{\varnothing}
\begin{document}
\author[Coates]{Tom Coates}
\author[Kasprzyk]{Alexander Kasprzyk}
\author[Prince]{Thomas Prince}
\address{Department of Mathematics\\Imperial College London\\London, SW$7$\ $2$AZ\\UK}
\email{t.coates@imperial.ac.uk}
\email{a.m.kasprzyk@imperial.ac.uk}
\email{t.prince12@imperial.ac.uk}
\title{Laurent Inversion}
\maketitle
\begin{abstract}
There are well-understood methods, going back to Givental and Hori--Vafa, that to a Fano toric complete intersection~$X$ associate a Laurent polynomial~$f$ that corresponds to $X$ under mirror symmetry. We describe a technique for inverting this process, constructing the toric complete intersection~$X$ directly from its Laurent polynomial mirror~$f$. We use this technique to construct a new four-dimensional Fano manifold.
\end{abstract}
\section{Introduction}
Fano manifolds are basic building blocks in algebraic geometry, and the classification of Fano manifolds is a long-standing and important open problem. The classification in dimensions one and two has been known since the 19th century: there is a unique one-dimensional Fano manifold, the complex projective line, and there are ten deformation families of two-dimensional Fano manifolds, the del~Pezzo surfaces. The three-dimensional classification was completed in the 1990s by Mori and Mukai, building on the rank-1 classification by Fano in the 1930s and Iskovskikh in the 1970s~\cite{Iskovskih:1,Iskovskih:2,Iskovskih:anticanonical,Mori--Mukai:Manuscripta,Mori--Mukai:Tokyo,Mori--Mukai:Kinosaki,Mori--Mukai:Erratum,Mori--Mukai:Turin}. Very little is known about the classification of Fano manifolds in higher dimensions. 

In~\cite{CCGGK}, Coates--Corti--Galkin--Golyshev--Kasprzyk announced a program to find and classify Fano manifolds using Mirror Symmetry. These methods should work in all dimensions. Extensive computational experiments suggest that, under mirror symmetry, $n$-dimensional Fano manifolds correspond to certain Laurent polynomials in $n$ variables with very special properties. It is now understood how to recover the known classifications in low dimensions from this perspective~\cite{A+,ACGK,CCGK}, but in order to use this to gain new insight into Fano classification we need to solve two problems:
\begin{enumerate}[label=(\Alph*),ref=\Alph*]
\item what is the class of Laurent polynomials that correspond, under Mirror Symmetry, to Fano manifolds? \label{problem:A}
\item given such a Laurent polynomial $f$, how can we construct the corresponding Fano manifold $X$?  \label{problem:B}
\end{enumerate}
We believe that problem~\ref{problem:A} here is now solved. Fano manifolds conjecturally correspond to \emph{rigid maximally mutable Laurent polynomials}. The correspondence here is (conjecturally) one-to-one, where we consider Fano manifolds up to deformation and Laurent polynomials up to certain birational changes of variable called\footnote{Mutations are close analogs of cluster transformations~\cite{FZ,GHKK} and wall-crossing formulae~\cite{Aur:1,Aur:2,KS}.} mutations~\cite{ACGK}. 
Maximally mutable Laurent polynomials~\cite{A+,Kasprzyk--Tveiten} are Laurent polynomials $f$ which admit, in a precise sense, as many mutations as possible; this notion makes sense in all dimensions. Maximally mutable Laurent polynomials typically occur in parametrised families, and those that do not are referred to as rigid. 

In this paper we make significant progress on problem~\ref{problem:B}. There are well-understood methods, going back to Givental and Hori--Vafa, that to a Fano toric complete intersection~$X$ associate a Laurent polynomial~$f$ that corresponds to $X$ under Mirror Symmetry. We describe a technique, \emph{Laurent inversion}, for inverting this process, constructing the toric complete intersection~$X$ directly from its Laurent polynomial mirror~$f$. In many cases this allows, given a Laurent polynomial $f$, the direct construction of a Fano manifold $X$ that corresponds to $f$ under Mirror Symmetry. Thus, in many cases, Laurent inversion solves problem~2 above. As proof of concept, in~\S\ref{sec:new_4d} below we construct a new four-dimensional Fano manifold, by applying Laurent inversion to a rigid maximally-mutable Laurent polynomial in four variables.

It is expected that, if a Fano manifold $X$ is mirror to a Laurent polynomial~$f$, then there is a degeneration from $X$ to the (singular) toric variety $X_f$ defined by the spanning fan of the Newton polytope of $f$. Thus one might hope to recover the Fano manifold $X$ from $f$ by smoothing $X_f$, for instance using the Gross--Siebert program~\cite{Gross--Siebert}. This works in dimension two~\cite{Prince}, but the higher-dimensional case is  significantly more involved. As we will see in~\S\ref{sec:embedded_degeneration} below, in many cases Laurent inversion constructs, along with $X$, an embedded degeneration from $X$ to the singular toric variety $X_f$ -- thus implementing the smoothing of $X_f$ expected from the Gross--Siebert program. Laurent inversion should therefore give a substantial hint as to the generalisations required to get a Gross--Siebert-style smoothing procedure working, in this context, in higher dimensions.

\section{Laurent Polynomial Mirrors for Toric Complete Intersections}
\label{sec:forward}

We begin by recalling how to associate to a toric complete intersection $X$ a Laurent polynomial that corresponds to $X$ under Mirror Symmetry. This question has been considered by many authors~\cite{Givental:toric,Hori--Vafa,Prz:1,Prz:2,Doran--Harder,CKP}, and we will give a construction which generalises and unifies all these perspectives below (in~\S\ref{sec:torus_charts}). Consider first the ambient toric variety or toric stack $Y$. We consider the case where:
\begin{equation}
  \label{eq:ambient_conditions}
  \begin{minipage}{0.94\linewidth}
    \begin{enumerate}
    \item $Y$ is a proper toric Deligne--Mumford stack;  
    \item the coarse moduli space of $Y$ is projective;  
    \item the generic isotropy group of $Y$ is trivial, that is, $Y$ is a toric \emph{orbifold}; and  
    \item at least one torus-fixed point in $Y$ is smooth. 
    \end{enumerate}
  \end{minipage}
\end{equation}
Conditions (i--iii) here are essential; condition~(iv) is less important and will be removed in~\S\ref{sec:torus_charts}.
In the original work by Borisov--Chen--Smith~\cite{Borisov--Chen--Smith}, toric Deligne--Mumford stacks are defined in terms of stacky fans. In our context, since the generic isotropy is trivial, giving a stacky fan that defines $Y$ amounts to giving a triple $(N;\Sigma;\rho_1,\ldots,\rho_R)$ where $N$ is a lattice, $\Sigma$ is a rational simplicial fan in $N \otimes \QQ$, and $\rho_1,\ldots,\rho_R$ are elements of $N$ that generate the rays of $\Sigma$. It will be more convenient for our purposes, however, to represent $Y$ as a GIT quotient $\big[ \CC^R/\!\!/_{\!\omega} (\Cstar)^r\big]$. Any such $Y$ can be realised this way, as we now explain.


\begin{dfn} \label{dfn:GIT_data}
  We say that $(K;\LL;D_1,\ldots,D_R;\omega)$ are \emph{GIT data} if $K \cong (\Cstar)^r$ is a connected torus of rank~$r$; $\LL = \Hom(\Cstar,K)$ is the lattice of subgroups of $K$; $D_1,\ldots,D_R \in \LL^*$ are characters of $K$ that span a strictly convex full-dimensional cone in $\LL^* \otimes \QQ$, and $\omega \in \LL^* \otimes \QQ$ lies in this cone.
\end{dfn}

GIT data $(K;\LL;D_1,\ldots,D_R;\omega)$ determine a quotient stack $\big[ V_\omega/K \big]$ with $V_\omega \subset \CC^R$, as follows. The characters $D_1,\ldots,D_R$ define an action of $K$ on $\CC^R$. Write $[R] := \{1,2,\ldots,R\}$. Say that a subset $I \subset [R]$ \emph{covers} $\omega$ if and only if $\omega = \sum_{i \in I} a_i D_i$ for some strictly positive rational numbers $a_i$, set $\cA_\omega = \{ I \subset [R]\mid \text{$I$ covers $\omega$}\}$, and set
\begin{align*}
  V_\omega = \bigcup_{I \in \cA_\omega} (\Cstar)^I \times \CC^{\bar{I}} && \text{where} && (\Cstar)^I \times \CC^{\bar{I}} = \big\{(x_1,\ldots,x_R) \in \CC^R\mid \text{$x_i \ne 0$ if $i \in I$}\big\}. 
\end{align*}
The subset $V_\omega \subset \CC^R$ is $K$-invariant, and $\big[ V_\omega/K\big]$ is the GIT quotient (stack) given by the action of $K$ on $\CC^R$ and the stability condition $\omega$. The  convexity hypothesis in Definition~\ref{dfn:GIT_data} ensures that $\big[ V_\omega/K\big]$ is proper.

\begin{rem}
  The quotient $\big[ V_\omega/K\big]$ here depends on $\omega$ only via the minimal cone $\sigma$ of the secondary fan such that $\omega \in \sigma$. The \emph{secondary fan} for GIT data $(K;\LL;D_1,\ldots,D_R;\omega)$ is the fan defined by the wall-and-chamber decomposition of the cone in $\LL^* \otimes \QQ$ spanned by $D_1,\ldots,D_R$, where the walls are given by the cones spanned by $\{D_i\mid i \in I\}$ such that $I \subset [R]$ and $|I|=r-1$. 
\end{rem}

\begin{dfn}
  \emph{Orbifold} GIT data are those such that the quotient $\big[ V_\omega/K\big]$ is a toric orbifold.
\end{dfn}

The quotient $\big[ V_\omega/K\big]$ is a toric Deligne--Mumford stack if and only if $\omega$ lies in the strict interior of a maximal cone in the secondary fan. A toric orbifold $Y$ satisfying the conditions~\eqref{eq:ambient_conditions} above arises as the quotient $\big[ V_\omega/K\big]$ for GIT data $(K;\LL;D_1,\ldots,D_R;\omega)$ as follows. Suppose that $Y$ is defined, as discussed above, by the stacky fan data $(N;\Sigma;\rho_1,\ldots,\rho_R)$. There is an exact sequence
\begin{equation}
  \label{eq:fan_sequence}
  \xymatrix{
    0 \ar[r] & \LL \ar[r] & \ZZ^R \ar[r]^\rho  & N \ar[r] & 0
  }
\end{equation}
where $\rho$ maps the $i$th element of the standard basis for $\ZZ^R$ to $\rho_i$; this defines $\LL$ and $K = \LL \otimes \Cstar$. Dualizing gives 
\begin{equation}
  \label{eq:divisor_sequence}
  \xymatrix{
    0 & \LL^* \ar[l] & (\ZZ^*)^R \ar[l]_D  & M \ar[l] & 0 \ar[l]
  }
\end{equation}
where $M := \Hom(N,\ZZ)$, and we set $D_i \in \LL^*$ to be the image under $D$ of the $i$th standard basis element for $(\ZZ^*)^R$. The stability condition $\omega$ is taken to lie in the strict interior of 
\[
C = \bigcap_{\text{maximal cones $\sigma$ of $\Sigma$}} C_\sigma
\]
where $C_\sigma$ is the cone in $\LL^* \otimes \QQ$ spanned by $\{D_i\mid i \in \sigma\}$; projectivity of the coarse moduli space of $Y$ implies that $C$ is a maximal cone of the secondary fan, and in particular that $C$ has non-empty interior.

We can reverse this construction, defining a stacky fan $(N;\Sigma;\rho_1,\ldots,\rho_n)$ from GIT data $(K;\LL;D_1,\ldots,D_R;\omega)$ such that $D_1,\ldots,D_R$ span $\LL^*$, as follows. The lattice $\LL$ and elements $D_1,\ldots,D_R \in \LL^*$ define the exact sequence~\eqref{eq:divisor_sequence}, and dualising gives~\eqref{eq:fan_sequence}. This defines the lattice $N$ and $\rho_1,\ldots,\rho_R$. The fan $\Sigma$ consists of the cones spanned by $\{\rho_i\mid i \in I\}$ where $I \subset [R]$ satisfies $[R]\setminus I \in \cA_\omega$.

\begin{rem}
  Once $K$,~$\LL$, and~$D_1,\ldots,D_R$ have been fixed, choosing $\omega$ such that the GIT data $(K;\LL;D_1,\ldots,D_R;\omega)$ define a toric Deligne--Mumford stack amounts to choosing a maximal cone in the secondary fan. 
\end{rem}

Under our hypotheses there is a canonical isomorphism between $\LL^*$ and the Picard lattice $\Pic(Y)$. We will denote the line bundle on $Y$ corresponding to a character $\chi \in \LL^*$ also by $\chi$.

\begin{dfn} \label{dfn:convex_partition_with_basis}
  Let $\Theta = (K;\LL;D_1,\ldots,D_R;\omega)$ be orbifold GIT data, and let $Y$ denote the corresponding toric orbifold. A \emph{convex partition with basis} for $\Theta$ is a partition $B,S_1,\ldots,S_k,U$ of $[R]$
  such that:
  \begin{enumerate}
  \item $\{D_b\mid b \in B\}$ is a basis for $\LL^*$; 
  \item $\omega$ is a non-negative linear combination of $\{D_b\mid b \in B\}$;
  \item each $S_i$ is non-empty; 
  \item for each $i \in [k]$, the line bundle $L_i := \sum_{j \in S_i} D_j$ on $Y$ is convex\footnote{A line bundle $L$ on a Deligne--Mumford stack $Y$ is convex if and only if $L$ is nef and is the pullback of a line bundle on the coarse moduli space $|Y|$ of $Y$ along the structure map $Y \to |Y|$. See~\cite{CGIJJM}.}; and 
  \item for each $i \in [k]$, $L_i$ is a non-negative linear combination of $\{D_b\mid b \in B\}$. 
  \end{enumerate}
  We allow $k=0$, and we allow $U=\emptyset$. 
\end{dfn}


\begin{rem}
  Since $\omega$ here is taken to lie in the strict interior of a maximal cone in the secondary fan, it is in fact a positive linear combination of $\{D_b\mid b \in B\}$. This positivity guarantees that the maximal cone spanned by $\{\rho_i\mid i \in [R]\setminus B\}$ defines a smooth torus-fixed point in $Y$.
\end{rem}

\begin{rem}
  It would be more natural to replace the condition that $L_i$ be convex here with the weaker condition that $L_i$ be nef. But, since we currently lack a Mirror Theorem that applies to toric complete intersections beyond the convex case, we will require convexity. If the ambient space $Y$ is a manifold, rather than an orbifold, then convexity and nef-ness coincide.
\end{rem}

Given:
\begin{equation}
  \label{eq:lots_of_data}
  \begin{minipage}{0.94\linewidth}
    \begin{enumerate}
    \item orbifold GIT data $\Theta = (K;\LL;D_1,\ldots,D_R;\omega)$;
    \item a convex partition with basis $B, S_1, \ldots, S_k, U$ for $\Theta$; and
    \item a choice of elements $s_i \in S_i$ for each $i \in [k]$;
    \end{enumerate}
  \end{minipage}
\end{equation}
we define a Laurent polynomial $f$, as follows. Without loss of generality we may assume that $B = [r]$. Writing $D_1,\ldots,D_R$ in terms of the basis $\{D_b\mid b \in B\}$ for $\LL^*$ yields an $r \times R$ matrix $\cM = (m_{i,j})$ of the form
\begin{equation}
  \label{eq:weight_matrix}
  \cM = \left(
    \begin{array}{c:ccc}
      \multirow{3}{*}{$\quad I_r\quad$} & m_{1,r+1} & \cdots & m_{1,R} \\
      & \vdots & & \vdots \\
      & m_{r,r+1} & \cdots & m_{r,R} \\
    \end{array}
  \right)
\end{equation}
where $I_r$ is an $r \times r$ identity matrix. Consider the function
\[
W = x_1 + x_2 + \cdots + x_R - k
\]
subject to the constraints
\begin{align}
  \label{eq:ambient_constraints}
  \prod_{j=1}^R x_j^{m_{i,j}} = 1 && i \in [r] \\
  \intertext{and}
  \label{eq:hypersurface_constraints}
  \sum_{j \in S_i} x_j = 1 && i \in [k]
\end{align}
For each $i \in [k]$, introduce new variables $y_j$, where $j \in S_i \setminus \{s_i\}$, and set $y_{s_i} = 1$. Solve the constraints~\eqref{eq:hypersurface_constraints} by setting:
\begin{align*}
  x_j = \frac{y_j}{\sum_{l \in S_i} y_l} && j \in S_i
\end{align*}
and express the variables $x_b$, $b \in B$, in terms of the $y_j$s and remaining $x_i$s using~\eqref{eq:ambient_constraints}. The function $W$ thus becomes a Laurent polynomial~$f$ in variables
\begin{equation}
  \label{eq:which_variables}
  \begin{aligned}
    x_i,\quad&\text{ where } i \in U,\\
    \text{ and }\quad
    y_j,\quad&\text{ where } j \in (S_1 \cup \cdots \cup S_k) \setminus \{s_1,\ldots,s_k\}.
  \end{aligned}
\end{equation}
We call the $x_i$ here the \emph{uneliminated variables}. 

Given data as in~\eqref{eq:lots_of_data}, let $f$ be the Laurent polynomial just defined. Let $Y$ denote the toric orbifold determined by $\Theta$, let $L_1,\ldots,L_k$ denote the line bundles on $Y$ from Definition~\ref{dfn:convex_partition_with_basis}, and let $X \subset Y$ be a complete intersection defined by a regular section of the vector bundle $\oplus_i L_i$. If $X$ is Fano, then Mirror Theorems due to Givental~\cite{Givental:toric}, Hori--Vafa~\cite{Hori--Vafa}, and Coates--Corti--Iritani--Tseng~\cite{CCIT:1,CCIT:2} imply that $f$ corresponds to $X$ under Mirror Symmetry  (c.f.~\cite[\S5]{CKP}). We say that $f$ \emph{is a Laurent polynomial mirror} for $X$.

\begin{rem} \label{rem:translation}
  If $f$ is a Laurent polynomial mirror for $X$ then the Picard--Fuchs local system for $f \colon (\Cstar)^n \to \CC$ coincides, after translation of the base if necessary, with the Fourier--Laplace transform of the quantum local system for~$X$; see~\cite{CCGGK,CCGK}. Thus we regard $f$ and $g := f-c$, where $c$ is a constant, as Laurent polynomial mirrors for the same manifold $Y$, since the Picard--Fuchs local systems for $f$ and $g$ differ only by a translation of the base (by~$c$).
\end{rem}

\begin{rem} \label{rem:reparametrization}
  If $f$ and $g$ are Laurent polynomials that differ by an invertible monomial change of variables then the Picard--Fuchs local systems for~$f$ and~$g$ coincide. Thus $f$ is a Laurent polynomial mirror for $X$ if and only if $g$ is a Laurent polynomial mirror for $X$.
\end{rem}

\begin{eg} \label{eg:cubic_forwards}
  Let $X$ be a smooth cubic surface. The ambient toric variety $Y = \PP^3$ is a GIT quotient $\CC^4 /\!\!/ \Cstar$ where $\Cstar$ acts on $\CC^4$ with weights $(1,1,1,1)$. Thus $Y$ is given by GIT data $(K;\LL;D_1,\ldots,D_4;\omega)$ with $K=\Cstar$, $\LL = \ZZ$, $D_1=D_2=D_3=D_4=1$, and $\omega=1$. We consider the convex partition with basis $B$,~$S_1$,~$\emptyset$, where $B = \{1\}$ and $S_1 = \{2,3,4\}$, and take $s_1 = 4$. This yields
  \[
  \cM = \left( 
    \begin{array}{c:ccc}
      1&1&1&1
    \end{array}
  \right)
  \]
  and
  \[
  W = x_1 + x_2 + x_3 + x_4 - 1
  \]
  subject to 
  \begin{align*}
    x_1 x_2 x_3 x_4 = 1 && \text{and} && x_2 + x_3 + x_4 = 1.
  \end{align*}
  We set:
  \begin{align*}
    x_1 = \frac{1}{x_2 x_3 x_4} &&
    x_2 = \frac{x}{1+x+y} &&                                  
    x_3 = \frac{y}{1+x+y} &&                                  
    x_4 = \frac{1}{1+x+y} &&                                  
  \end{align*}
  where, in the notation above, $x = y_2$ and $y = y_3$. Thus
  \[
  f =  \frac{(1+x+y)^3}{xy}
  \]
  is a Laurent polynomial mirror to $Y$.
\end{eg}

\begin{eg}
  Let $Y$ be the projective bundle $\PP\big(\cO \oplus \cO \oplus \cO(-1)\big) \to \PP^3$. This arises from the GIT data $(K;\LL;D_1,\ldots,D_7;\omega)$ where $K = (\Cstar)^2$, $\LL = \ZZ^2$, 
  \begin{align*}
    D_1 = D_4 = D_6 = D_7 = (1,0) && D_2 = D_3 = (0,1) && D_5 = (-1,1)
  \end{align*}
  and $\omega = (1,1)$. We consider the convex partition with basis $B,S_1,S_2,U$ where $B = \{1,2\}$, $S_1 = \{3,4\}$, $S_2 = \{5,6\}$, $U = \{7\}$. This yields:
  \[
  \cM =
  \left(
  \begin{array}{cc:ccccc}
    1&0&0&1&-1&1&1\\
    0&1&1&0&1&0&0
  \end{array}
  \right)
  \]
  Choosing $s_1 = 3$ and $s_2 = 5$, we find that
  \[
  f = \frac{(1+x)}{xyz} + (1+x)(1+y) + z
  \]
  Here, in the notation above, $x = y_4$, $y = y_6$, and $z = x_7$.
\end{eg}

\section{Laurent Inversion}

To invert the process described in~\S\ref{sec:forward}, that is, to pass from a Laurent polynomial $f$ to orbifold GIT data $\Theta$, a convex partition  with basis  $B,S_1,\ldots,S_k,U$ for $\Theta$, and elements $s_i \in S_i$, $i \in [k]$, would amount to expressing $f$ in the form 
\begin{equation}
  \label{eq:scaffolding}
  f =  f_1 + \cdots + f_r + \sum_{u \in U} x_u
\end{equation}
where
\[
f_a = \prod_{i=1}^k \prod_{j \in S_i} \left( \frac{\sum_{l \in S_i} y_l}{y_j} \right)^{m_{a,j}} \times \prod_{u \in U} x_u^{-m_{a,u}}.
\]
In favourable circumstances, we can obtain from a decomposition~\eqref{eq:scaffolding} a smooth toric orbifold $Y$ and convex line bundles $L_1,\ldots,L_k$ on $Y$ such that the complete intersection $X \subset Y$ defined by a regular section of the vector bundle $\oplus_i L_i$ is Fano and corresponds to $f$ under Mirror Symmetry. In general there are many such decompositions of $f$. Not every decomposition gives rise to a smooth toric orbifold $Y$, for example because not every decomposition gives rise to valid GIT data\footnote{The characters $D_1,\ldots,D_R$ of $K = (\Cstar)^r$ defined, via equation~\eqref{eq:weight_matrix}, by a decomposition~\eqref{eq:scaffolding} may not span a strictly convex full-dimensional cone.}. Even when the decomposition~\eqref{eq:scaffolding} gives orbifold GIT data $(K;\LL;D_1,\ldots,D_R;\omega)$, and hence an ambient toric orbifold $Y$, it is not always possible to choose the stability condition $\omega$ such that $Y$ has a smooth torus-fixed point, or such that the line bundles $L_1,\ldots,L_k$ are simultaneously convex, or such that $X$ is Fano. In practice, however, this technique is surprisingly effective. 

\begin{dfn}
  We refer to a decomposition~\eqref{eq:scaffolding} as a \emph{scaffolding} for $f$, and to the Laurent polynomials $f_a$ involved as \emph{struts}.
\end{dfn}

\begin{algorithm}
  We remark -- and this is a key methodological point --  that scaffoldings of $f$ can be enumerated algorithmically. Let $A = \ZZ^s$ denote the lattice containing $\Newt{f}$. A partition $S'_1,\ldots,S'_k,U'$ of the standard basis for $A$, where we allow $k=0$ and allow $U' = \emptyset$, defines a collection of standard simplices
  \begin{align*}
    \Delta(i) = \Conv \big(\{0\} \cup S'_i\big) && i \in [k].
  \end{align*}
  We call a polytope $\Delta$ a \emph{strut} if it is a translation of a Minkowski sum of dilations of these standard simplices. A scaffolding~\eqref{eq:scaffolding} for $f$ determines a collection of struts $\Delta_a$ and lattice points $p_u$, each contained in $P := \Newt{f}$, where $\Delta_a = \Newt{f_a}$ and $p_u$ is the standard basis element corresponding to the uneliminated variable $x_u$. The struts $\Delta_a$ may overlap, and may overlap with the $p_u$. We refer to a collection $\{\Delta_a\mid a \in [r]\}$, $\{p_u\mid u \in U'\}$ of:
  \begin{enumerate}
  \item struts $\{\Delta_a\mid a \in [r]\}$ with respect to some partition $S_1',\ldots,S_k',U'$; and
  \item standard basis elements $\{p_u\mid u \in U'\}$;
  \end{enumerate}
  all of which are contained in a polytope $P$, as a \emph{scaffolding} for $P$.
  One can check whether a scaffolding for $\Newt{f}$ arises from a scaffolding~\eqref{eq:scaffolding} for $f$ by checking if the coefficients from the associated struts $f_a$ and uneliminated variables $x_u$ sum to give the coefficients of $f$. Since all coefficients of the struts $f_a$ are positive, only finitely many scaffoldings for $\Newt{f}$ need to be checked. We are free to relax our notion of scaffolding, demanding that the left- and right-hand sides of~\eqref{eq:scaffolding} agree only up to a constant monomial  -- see Remark~\ref{rem:translation}. This extra flexibility is often useful.
\end{algorithm}

\begin{rem}
  It is more meaningful, in view of Remark~\ref{rem:reparametrization}, to allow scaffoldings of $\Newt{f}$ that are based on a partition $S_1',\ldots,S_k',U'$ of an arbitrary basis for $A$, rather than the standard basis. For fixed $f$, only finitely many such generalised scaffoldings need be checked.
\end{rem}

\begin{eg}[$dP_3$]
Consider now the Laurent polynomial
\[
f=\frac{(1+x+y)^3}{xy}
\]
from Example~\ref{eg:cubic_forwards}. A scaffolding for $\Newt{f}$ is given by a single standard 2-simplex, dilated by a factor of three:
\begin{center}
\includegraphics{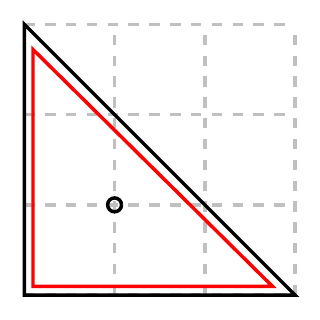}
\end{center}
Indeed $f$ is equal to a single strut, with no uneliminated variables. From this we read off $r=1$, $k = 1$, $B = \{1\}$, $S_1 = \{2,3,4\}$, $U = \varnothing$, and the exponents of the strut give:
\[
\cM = \left( 
  \begin{array}{c:ccc}
    1&1&1&1
  \end{array}
\right)
\]
This gives GIT data $\Theta = (K;\LL;D_1,\ldots,D_4;\omega)$ with $K=\Cstar$, $\LL = \ZZ$, $D_1=D_2=D_3=D_4=1$, and $\omega=1$; note that the secondary fan here has a unique maximal cone.
The corresponding toric variety is $Y = \PP^3$. The line bundle $L_1 = \sum_{j \in S_1} D_j = \cO(3)$ is nef. Thus $B,S_1,\varnothing$ is a convex partition with basis for $\Theta$. That is, by scaffolding $f$ we obtain the cubic hypersurface as in Example~\ref{eg:cubic_forwards}.
\end{eg}

\begin{eg}[$dP_6$]
The projective plane blown up in three points, $dP_6$, is toric, but it has two famous models as a complete intersection:
\begin{enumerate}
\item as a hypersurface of type $(1,1,1)$ in $\PP^1\times\PP^1\times\PP^1$;
\item as the intersection of two bilinear equations in $\PP^2\times\PP^2$.
\end{enumerate}
Let us see how these arise from Laurent inversion. The Laurent polynomial mirror to $dP_6$ that we shall use is:
\[
f=x+y+\frac{1}{x}+\frac{1}{y}+\frac{x}{y}+\frac{y}{x}.
\]
We may scaffold $\Newt(f)$ in two different ways: using three triangles, and using a pair of squares:
\begin{center}
\includegraphics{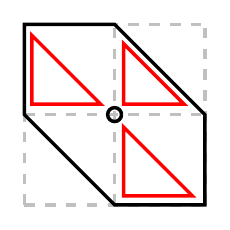}
\raisebox{30px}{$\quad\text{ and }\quad$}
\includegraphics{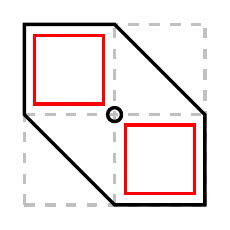}
\end{center}
These choices correspond, respectively, to the scaffoldings
\[
f=(1+x+y) +\frac{(1+x+y)}{x}+\frac{(1+x+y)}{y} - 3
\qquad\text{ and }\qquad
f=\frac{(1+x)(1+y)}{x}+\frac{(1+x)(1+y)}{y} - 2.
\]
As discussed, we ignore the constant terms.

From the first scaffolding we read off $r=3$, $k=1$, $B = \{1,2,3\}$, $S_1 = \{4,5,6\}$, $U = \varnothing$, and the exponents of the struts give:
\[
\cM = \left(
\begin{array}{ccc:ccc}
  1 & 0 & 0 & 1 & 0 & 0 \\
  0 & 1 & 0 & 0 & 1 & 0 \\
  0 & 0 & 1 & 0 & 0 & 1
\end{array}
\right)
\]
This gives GIT data $\Theta = (K;\LL;D_1,\ldots,D_6;\omega)$ with $K=(\Cstar)^3$, $\LL = \ZZ^3$, $D_1=D_4=(1,0,0)$, $D_2=D_5=(0,1,0)$, $D_3=D_6=(0,0,1)$, and $\omega=(1,1,1)$; the secondary fan here again has a unique maximal cone. The corresponding toric variety is $Y = \PP^1 \times \PP^1 \times \PP^1$. The line bundle $L_1 = \sum_{j \in S_1} D_j$ is $\cO(1,1,1)$, so we see that $f$ is a Laurent polynomial mirror to a hypersurface of type $(1,1,1)$ in $\PP^1 \times \PP^1 \times \PP^1$.

From the second scaffolding we read off $r=2$, $k=2$, $B=\{1,2\}$, $S_1 = \{3,4\}$, $S_2 = \{5,6\}$, $U = \varnothing$, and the exponents of the struts give:
\[
\cM = \left(
\begin{array}{cc:cccc}
1&0&0&1&1&0 \\
0&1&1&0&0&1 
\end{array}
\right)
\]
This gives GIT data $\Theta = (K;\LL;D_1,\ldots,D_6;\omega)$ with $K=(\Cstar)^2$, $\LL = \ZZ^2$, $D_1=D_4=D_5=(1,0)$, $D_2=D_3=D_6=(0,1)$, and $\omega=(1,1)$; once again the secondary fan has a unique maximal cone. The corresponding toric variety $Y$ is $\PP^2 \times \PP^2$. The line bundles $L_1 = D_3 + D_4$ and $L_2 = D_5 + D_6$ are both equal to $\cO(1,1)$, so we see that $f$ is a Laurent polynomial mirror to the complete intersection of two hypersurfaces defined by bilinear equations in $\PP^2 \times \PP^2$.

\end{eg}

\begin{eg} \label{eg:3-4}
  Consider the rigid maximally-mutable Laurent polynomial
  \begin{figure}
    \centering
    \includegraphics[scale=0.6]{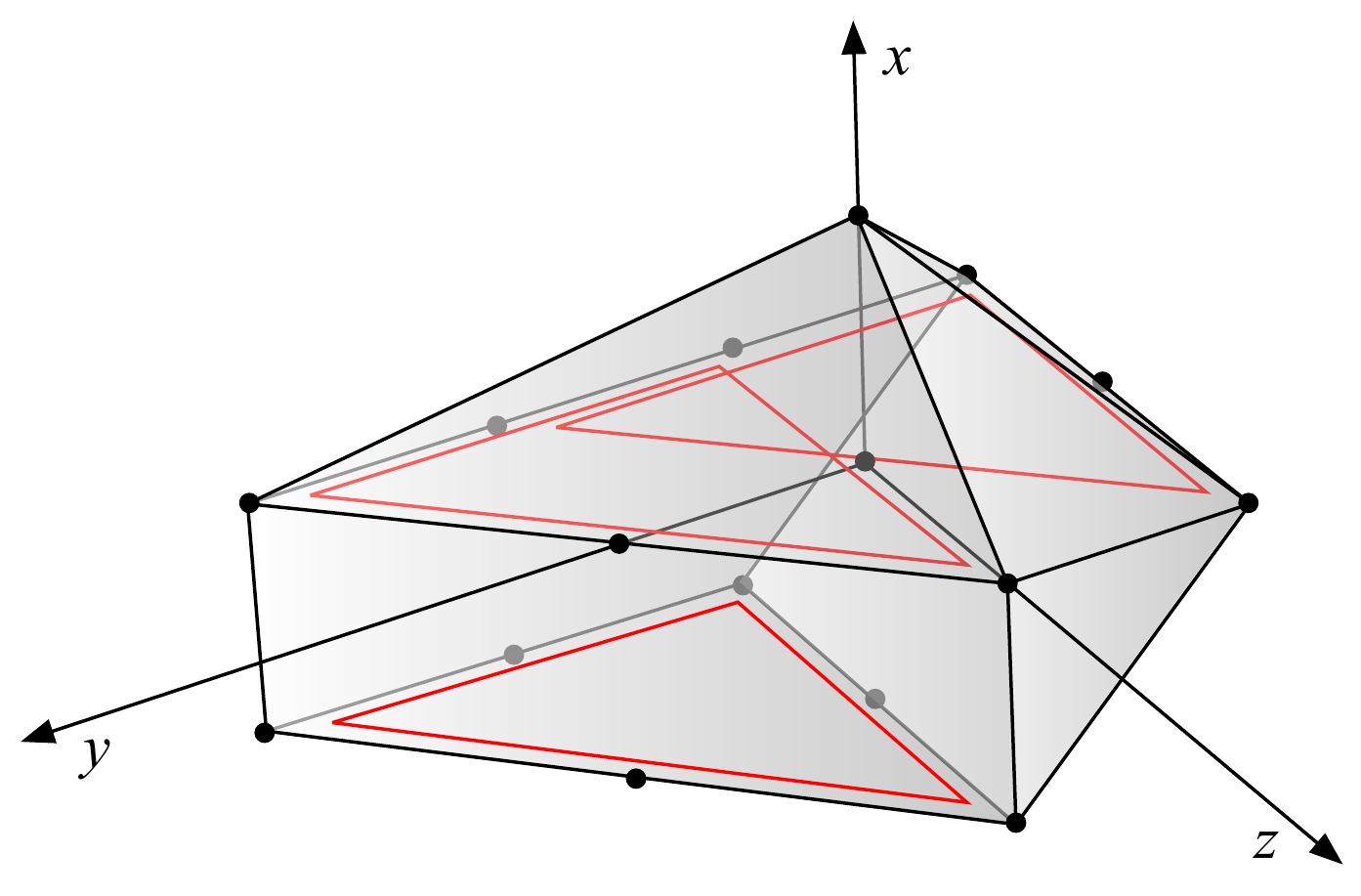}
    \caption{A scaffolding for $\Newt{f}$ in Example~\ref{eg:3-4}.}
    \label{fig:3-4}
  \end{figure}
  \[
  f = x + \frac{y^2}{z} + 2 y + \frac{3 y}{z} + z + \frac{3}{z} + \frac{z}{y} + \frac{2}{y} + \frac{1}{y z} + \frac{y^2}{x z} + \frac{2 y}{x} + \frac{2 y}{x z} + \frac{z}{x} + \frac{2}{x} + \frac{1}{x z}.
  \]
  The Newton polytope of $f$ can be scaffolded as in Figure~\ref{fig:3-4}, and there is a corresponding scaffolding of $f$:
  \[
  f = x + \frac{(1+y+z)^2}{xz} + \frac{(1+y+z)^2}{z} + \frac{(1+y+z)^2}{yz}
  \]
  From this we read off $r=3$, $k=1$, $B=\{1,2,3\}$, $U=\{4\}$, $S_1 = \{5,6,7\}$,  and the exponents of the struts give:
  \[
  \cM = \left(
    \begin{array}{ccc:cccc}
      1 & 0 & 0 & 1 & 1 & 0 & 1 \\
      0 & 1 & 0 & 0 & 1 & 0 & 1\\
      0 & 0 & 1 & 0 & 0 & 1 & 1
    \end{array}
  \right)
  \]
  This gives GIT data $\Theta = (K;\LL;D_1,\ldots,D_6;\omega)$ with $K=(\Cstar)^3$, $\LL = \ZZ^3$, $D_1=D_4=(1,0,0)$, $D_2=(0,1,0)$, $D_3=D_6=(0,0,1)$, $D_4=(1,1,0)$, and $D_7=(1,1,1)$.
  The secondary fan is as shown in Figure~\ref{fig:3-4_secondary_fan}. Choosing $\omega = (3,2,1)$ yields a weak Fano toric manifold $Y$ such that the line bundle $L_1 = \sum_{j \in S_1} D_j$ is convex. Let $X$ denote the hypersurface in $Y$ defined by a regular section of $L_1$. The class $-K_Y - L_1$ is nef but not ample on $Y$, but it becomes ample on restriction to $X$; thus $X$ is Fano (cf.~\cite[\S 57]{CCGK}). We see that $f$ is a Laurent polynomial mirror to $X$. This example shows that our Laurent inversion technique applies in cases where the ambient space $Y$ is not Fano. In fact $Y$ need not even be weak Fano.
  \begin{figure}
    \centering
    \begin{tikzpicture}[scale=0.5,arr/.style={thin,->,shorten >=2pt,shorten <=2pt,>=stealth}]
      \coordinate [label={below left:\tiny $(0,0,1)$}] (A) at (0, 0);
      \coordinate [label={above left:\tiny $(0,1,0)$}] (B) at (0, 6);
      \coordinate [label={below right:\tiny $(1,0,0)$}] (C) at (6,0);
      \coordinate [label={left:\tiny $(1,1,1)$}] (D) at (2,2);
      \coordinate [label={above right:\tiny $(1,1,0)$}] (E) at (3,3);
      
      \draw [thick] (A) -- (B) -- (E) -- (C) -- (A);
      \draw [thick] (A) -- (D) -- (E);
      \draw [thick] (B) -- (D) -- (C);
      
      \node (L) at (3.2,5.15) {\tiny $L_1$};
      \node (mK) at (5.55,2.2) {\tiny ${-K_Y}$};
      
      \coordinate (K) at (12/5,9/5);
      
      \draw [fill=black] (K) circle (0.1);
      \draw [fill=black] (2,2) circle (0.1);
      
      \draw[arr] (3,5) to [bend right=15] (2,2) ;
      \draw[arr] (5.3,2) to [bend left=5] (K) ;
    \end{tikzpicture}
    \caption{The secondary fan for Example~\ref{eg:3-4}, sliced by the plane $x+y+z=1$.}
    \label{fig:3-4_secondary_fan}
  \end{figure}
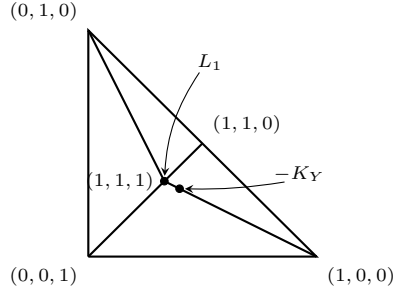
\end{eg}

\section{A New Four-Dimensional Fano Manifold} \label{sec:new_4d}

Consider 
\[
f = \frac{(1+x)^2}{xyw} + \frac{x}{z} + y + z + w
\]
This is a rigid maximally-mutable Laurent polynomial in four variables. It is presented in scaffolded form, and we read off
$r=2$, $k=1$, $B=\{1,2\}$, $S_1 = \{3,4\}$, $U=\{5,6,7\}$. The exponents of the struts give:
\[
\cM = \left(
  \begin{array}{cc:ccccc}
    1 & 0 & 1 & 1 & 1 & 0 & 1 \\
    0 & 1 & 1 & -1 & 0 & 1 & 0
  \end{array}
\right)
\]
This yields GIT data $\Theta = (K;\LL;D_1,\ldots,D_6;\omega)$ with $K=(\Cstar)^2$, $\LL = \ZZ^2$, $D_1=D_5=D_7=(1,0)$, $D_2=D_6=(0,1)$, $D_3=(1,1)$, and $D_4=(1,-1)$. We choose the stability condition $\omega = (5,2)$, thus obtaining a Fano toric orbifold $Y$ such that the line bundle $L_1 = D_3+D_4$ on $Y$ is convex.  Let $X$ denote the four-dimensional Fano manifold defined inside $Y$ by a regular section of $L_1$. 

The Fano manifold $X$ is new. To see this, we can compute the regularised quantum period $\widehat{G}_X$ of $X$. Since $f$ is a Laurent polynomial mirror to $X$, the regularised quantum period $\widehat{G}_X$ coincides with the classical period of $f$:
\begin{align*}
  \pi_f(t) = \sum_{d=0}^\infty c_d t^d && \text{where} && c_d = \coeff_1 \big(f^d\big).
\end{align*}
This is explained in detail in~\cite{CCGGK,CCGK}. In the case at hand, 
\[
\widehat{G}_X = \pi_f(t) = 1 + 12 t^3 + 120 t^5 + 540 t^6 + 20160 t^8 +  33600 t^9 + \cdots
\]
and we see that $\widehat{G}_X$ is not contained in the list of regularised quantum periods of known four-dimensional Fano manifolds~\cite{CKP,known_4d}. Thus $X$ is new. We did not find $X$ in our systematic search for four-dimensional Fano toric complete intersections~\cite{CKP}, because there we considered only ambient spaces that are Fano toric \emph{manifolds} whereas the ambient space $Y$ here has non-trivial orbifold structure. This is striking because the degree $K_X^4 = 433$ of $X$ is not that low -- compare with Figure~5 in~\cite{CKP}. In dimensions 2 and 3 only Fano manifolds of low degree fail to occur as complete intersections in toric manifolds.  The space $Y$ can be obtained as the unique non-trivial flip of the projective bundle $\PP\big(\cO(-1) \oplus \cO^{\oplus 3} \oplus \cO(1)\big)$ over $\PP^1$.  As was pointed out to us by Casagrande, the other extremal contraction of $X$, which is small, exhibits $X$ as the blow-up of $\PP^4$ in a plane conic.  This suggests that restricting to smooth ambient spaces when searching for Fano toric complete intersections may omit many Fano manifolds with simple classical constructions.


\section{From Laurent Inversion to Toric Degenerations}
\label{sec:embedded_degeneration}

Suppose now that we have a scaffolding~\eqref{eq:scaffolding} for the Laurent polynomial $f$, and that this gives rise to:
\begin{enumerate}
\item orbifold GIT data $\Theta = (K;\LL;D_1,\ldots,D_R;\omega)$;
\item a convex partition with basis $B, S_1, \ldots, S_k, U$ for $\Theta$; and
\item a choice of elements $s_i \in S_i$ for each $i \in [k]$.
\end{enumerate}
We now explain how to pass from this data to a toric degeneration of the complete intersection $X \subset Y$ defined by a regular section of the vector bundle $\oplus_i L_i$. This degeneration was discovered independently by Doran--Harder~\cite{Doran--Harder}; see~\S\ref{sec:torus_charts} for an alternative view on their construction. In favourable circumstances, as we will explain, the central fiber of this toric degeneration is the Fano toric variety $X_f$ defined by the spanning fan of $\Newt{f}$. The existence of such a degeneration is predicted by Mirror Symmetry.

By assumption we have, as in~\S\ref{sec:forward}, an $r \times R$ matrix $\cM = (m_{i,j})$ of the form:
\[
\cM = \left(
\begin{array}{c:ccc}
  \multirow{3}{*}{$\quad I_r\quad$} & m_{1,r+1} & \cdots & m_{1,R} \\
  & \vdots & & \vdots \\
  & m_{r,r+1} & \cdots & m_{r,R} \\
\end{array}
\right)
\]
such that $l_{b,i} := \sum_{j \in S_i} m_{b,j}$ is non-negative for all $b \in [r]$ and $i \in [k]$. The exact sequence~\eqref{eq:fan_sequence} becomes
\[
\xymatrix{
  0 \ar[r] & \ZZ^r \ar[r]^{\cM^T} & \ZZ^R \ar[r]^\rho  & N \ar[r] & 0
}
\]
and, writing $\rho_i \in N$ for the image under $\rho$ of the $i$th standard basis vector in $\ZZ^R$, we find that $\{ \rho_i\mid r<i\leq R\}$ is a distinguished basis for $N$ and that
\begin{align*}
  \rho_i = -\sum_{j=r+1}^R m_{i,j} \rho_j && \text{for all $i \in [r]$.}
\end{align*}
Let $M = \Hom(N,\ZZ)$ and define $u_j\in M$, $j\in [k]$, by
\[
u_j(\rho_i)=\begin{cases}
0&\text{if $r < i \leq R$ and $i\not\in S_j$};\\
1&\text{if $r < i \leq R$ and $i\in S_j$}.\\
\end{cases}
\]
Let $N':=N\cap H_{u_1}\cap\ldots\cap H_{u_k}$ be the sublattice of $N$ given by restricting to the intersection of the hyperplanes $H_{u_i}:=\{v\in N\mid u_i(v)=0\}$. Let $\Sigma'$ denote the fan defined by intersecting $\Sigma$ with $N'_\QQ$, and let $X'$ be the toric variety defined by $\Sigma'$.

\begin{pro}
  There is a flat degeneration $\XX \to \AA^1$ with general fiber $\XX_t$ isomorphic to $X$ and special fiber $\XX_0$ isomorphic to $X'$.
\end{pro}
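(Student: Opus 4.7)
The plan is to realize $\XX$ as the vanishing scheme in $Y\times\AA^1$ of a one-parameter family of sections of $\bigoplus_iL_i$ that interpolates between generic sections (whose zero loci represent $X$ for $t\neq 0$) and specific binomial sections (whose common zero locus is $X'$ at $t=0$). In the Cox ring of $Y$ I identify, for each $i\in[k]$, two distinguished sections of $L_i$: the \emph{simplex section} $\sigma_i := \prod_{j\in S_i}x_j$ and the \emph{monomial section} $\tau_i := \prod_{b\in B}x_b^{(L_i)_b}$, where $(L_i)_b$ is the coefficient of $D_b$ in the expansion of $L_i=\sum_{j\in S_i}D_j$ in the basis $\{D_b\mid b\in B\}$ of $\LL^*$. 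Condition~(v) of Definition~\ref{dfn:convex_partition_with_basis} forces $(L_i)_b\geq 0$, so $\tau_i$ is polynomial. A short Cox-ring computation using the relation $\rho_a = -\sum_{j>r}m_{a,j}\rho_j$ for $a\in B$ coming from~\eqref{eq:fan_sequence} shows $\sigma_i/\tau_i = \chi^{u_i}$ on the dense torus $T_Y=N\otimes\Cstar$; equivalently $\sigma_i-\tau_i = \tau_i(\chi^{u_i}-1)$ in Cox coordinates.

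Pick generic sections $\tilde s_i\in H^0(Y,L_i)$ and set
\[
\XX := V\bigl((\sigma_i-\tau_i)+t\,\tilde s_i : i\in[k]\bigr) \subset Y\times\AA^1,
\]
with projection $\pi\colon\XX\to\AA^1$. For any $t\neq 0$ the defining sections $(\sigma_i-\tau_i)+t\tilde s_i$ are again generic sections of $L_i$, so $\XX_t$ is a smooth complete intersection defined by generic sections of $L_1,\ldots,L_k$, hence (by the definition of $X$) $\XX_t\cong X$. For $t=0$ the defining equations are the binomials $\sigma_i-\tau_i$; on $T_Y$ these cut out the subtorus $T' := N'\otimes\Cstar = \{\chi^{u_1}=\cdots=\chi^{u_k}=1\}$. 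By the standard description of subtorus closures in toric varieties, the closure $\overline{T'}\subset Y$ is the toric subvariety with fan $\Sigma\cap N'_\QQ=\Sigma'$, namely $X'$. Flatness of $\pi$ follows from the Hilbert polynomial criterion, since all fibers are codimension-$k$ complete intersections cut out by sections of the fixed bundle $\bigoplus_iL_i$ and hence have constant Hilbert polynomial with respect to any ample class on $Y$.

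The main obstacle is the scheme-theoretic identification $\XX_0\cong X'$: set-theoretically this is immediate from the torus analysis, but one must rule out spurious components along torus-invariant boundary divisors and verify reducedness. This reduces to checking that $u_1,\ldots,u_k$ form a $\ZZ$-basis of the saturated sublattice $(N/N')^*\subset M$. In our setup this is automatic: since $B,S_1,\ldots,S_k,U$ is a partition of $[R]$, the $S_i$ are pairwise disjoint, so writing $u_i=\sum_{j\in S_i}e_j$ in the dual basis of $\rho_{r+1},\ldots,\rho_R$ makes $u_1,\ldots,u_k$ into 0/1-vectors with disjoint supports, and such vectors always form a $\ZZ$-basis of their integer span.
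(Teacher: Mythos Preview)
Your approach is essentially the paper's: degenerate the defining sections of $X$ to the binomial sections $\tau_i-\sigma_i$, which in your notation and the paper's coincide (since $(L_i)_b=l_{b,i}=\sum_{j\in S_i}m_{b,j}$ and $B=[r]$). You supply more detail than the paper does---the relation $\sigma_i/\tau_i=\chi^{u_i}$, the Hilbert-polynomial argument for flatness, and the saturation check for the scheme-theoretic identification of $\XX_0$---all of which the paper leaves implicit.

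One point to tighten: the sentence ``for any $t\neq 0$ the defining sections $(\sigma_i-\tau_i)+t\tilde s_i$ are again generic sections of $L_i$, so $\XX_t\cong X$'' is not correct as written. A specific section is never ``generic'', the fiber $\XX_t$ may be singular for special nonzero $t$, and even smooth fibers can vary in moduli, so there is no reason every $\XX_t$ with $t\neq 0$ is isomorphic to the given $X$. The clean fix---and this is implicitly what the paper does---is to take $\tilde s_i$ to be the actual sections $s_i$ cutting out $X$ and parametrize the family the other way round, e.g.\ by $s_i+t(\sigma_i-\tau_i)$, so that $\XX_0=X$ on the nose and $\XX_1$ (or the fiber at $\infty$ after reparametrizing) is $X'$. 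With that adjustment your argument is complete.
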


\begin{proof}
  Recall that $X$ is cut out of the toric variety $Y$ by regular sections $s_i$ of the line bundles $L_i$, $i \in [k]$. By deforming $s_i$ to the binomial section $s_i'$ of $L_i$ given by
  \[
  s_i = \prod_{a \in [r]} x_a^{l_{a,i}} - \prod_{j \in S_i} x_j
  \]
  we can construct a flat degeneration with general fiber $X$ and special fiber a toric variety $X'$. Since $u_i(\rho_a) = {- l_{a,i}}$, we see that the fan $\Sigma'$ defining $X'$ is the intersection of the fan $\Sigma$ defining $Y$ with $H_{u_1} \cap \cdots \cap H_{u_k}$, as claimed.
\end{proof}

Our choice of elements $s_i \in S_i$, $i \in [k]$, gives rise to a distinguished basis for $N'$, consisting of
\begin{equation}
  \label{eq:which_basis}
  \begin{aligned}
    \rho_i,\quad&\text{where $i \in U$},\\
    \text{ and }\quad
    \rho_i - \rho_{s_j},\quad&\text{where $i \in S_j \setminus \{s_j\}$ for some $j \in [k]$.}
  \end{aligned}
\end{equation}
Comparing~\eqref{eq:which_variables} with~\eqref{eq:which_basis}, we see that this choice of basis also specifies an isomorphism between $N'$ and the lattice $A$ that contains $\Newt{f}$. Thus it makes sense to ask whether the fan $\Sigma'$ coincides with the spanning fan of $\Newt{f}$; in this case we will say that $\Sigma'$ \emph{is the spanning fan}. If $\Sigma'$ is the spanning fan then the above construction gives a degeneration from $X$ to the (singular) toric variety $X_f$, as predicted by Mirror Symmetry.

\begin{rem}
  In any given example it is easy to check whether $\Sigma'$ is the spanning fan. This is often the case -- it holds, for example, for all of the examples in this paper -- but it is certainly not the case in general. It would be interesting to find a geometrically meaningful condition that guarantees that $\Sigma'$ is the spanning fan. This problem is challenging because, at this level of generality, we do not have much control over what the fan $\Sigma$ looks like. It is easy to see that each ray of $\Sigma'$ passes through some vertex of a strut in the scaffolding of $\Newt{f}$, and that the cone $C_a' \subset N'_\QQ$ over the strut $\Delta_a = \Newt{f_a}$ is given by the intersection with $N'_\QQ$ of the cone $C_a \subset N$ spanned by $\{ \rho_a \} \cup \{\rho_i\mid i \in S_1 \cup \cdots \cup S_k\}$. But typically only some of the $C_a$ lie in $\Sigma$ (indeed typically the cones $C_a'$ overlap with each other) and in general it is hard to say more. Doran--Harder~\cite{Doran--Harder} give sufficient conditions for $\Sigma'$ to be a refinement of the spanning fan, but for applications to Mirror Symmetry this is not enough.
\end{rem}

\section{Torus Charts on Landau--Ginzburg Models}
\label{sec:torus_charts}

Suppose, as before, that we have:
\begin{equation}
  \label{eq:LG_data}
  \begin{minipage}{0.93\linewidth}
    \begin{enumerate}
    \item orbifold GIT data $\Theta = (K;\LL;D_1,\ldots,D_R;\omega)$;
    \item a convex partition with basis $B, S_1, \ldots, S_k, U$ for $\Theta$; and
    \item a choice of elements $s_i \in S_i$ for each $i \in [k]$.
    \end{enumerate}
  \end{minipage}
\end{equation}
Let $Y$ be the corresponding toric orbifold, and $X \subset Y$ the complete intersection defined by a regular section of the vector bundle $\oplus_i L_i$. Givental~\cite{Givental:toric} and Hori--Vafa~\cite{Hori--Vafa} have defined a Landau--Ginzburg model that corresponds to $X$ under Mirror Symmetry. In this section we explain how to write down a torus chart on the Givental/Hori--Vafa mirror model on which the superpotential restricts to a Laurent polynomial. This gives an alternative perspective on Doran--Harder's notion of \emph{amenable collection subordinate to a nef partition}~\cite[\S\S2.2--2.3]{Doran--Harder}.

\begin{dfn} 
  Suppose that we have fixed orbifold GIT data $\Theta$ defining $Y$, as in~(\ref{eq:LG_data}-i). The Landau--Ginzburg model mirror to $Y$ is the family of tori equipped with a superpotential:
  \[
  \xymatrix{ (\Cstar)^R \ar^-{W}[rr] \ar_D[d] & & \CC \\
    T_{\mathbb{L}^*} & & }
  \]
  where $W= \sum^R_{j = 1}{x_j}$;  $x_1,\ldots,x_R$ are the standard co-ordinates on $(\Cstar)^R$; $D$ is the map from~\eqref{eq:divisor_sequence}; and $T_{\LL^*}$ is the torus $\LL^* \otimes \Cstar$.
\end{dfn}

\noindent In our context, rather than considering the whole family over $T_{\mathbb{L}^*}$, we restrict to the fiber over~$1$. Extending the diagram defining the Landau--Ginzburg model to include this fiber we have:
\[
    \xymatrix{T_M \ar^-{\rho^\vee}[rr] && (\Cstar)^R \ar^-{W}[rr] \ar_D[d] & & \CC \\
     && T_{\mathbb{L}^*} & & }
\]
where $T_M = M \otimes \Cstar$ and $\rho^\vee$ is the dual to the fan map $\rho$ from~\eqref{eq:fan_sequence}.

\begin{dfn}
  Suppose that we have fixed orbifold GIT data and a nef partition with basis, as in~\eqref{eq:LG_data}. The Landau--Ginzburg model mirror to $X$ is the restriction of the mirror model for $Y$ to a subvariety $X^\vee$, defined by the following commutative diagram:
\[
    \xymatrix{
     & & \CC & & \\
    X^\vee \ar^-j[rr] \ar[urr] & & T_M \ar^-{\rho^\vee}[rr] \ar[d] & & (\Cstar)^R \ar_-{W}[ull] \ar^D[d] \ar^{\Phi}[dll]\\
    & & \CC^l & & T_{\mathbb{L}^*} }
\]
where $\Phi := \big( \sum_{i \in S_1}{x_i},\ldots, \sum_{i \in S_k}{x_i} \big)$ and $j$ is the inclusion of the fiber over $1$. The Landau--Ginzburg model mirror to $X$ is the map
\[
\big(\rho^\vee \circ j\big)^* W \colon X^\vee \rightarrow \CC
\]
\end{dfn}

We now present a general technique for finding torus charts on $X^\vee$ on which the restriction of the superpotential $\big(\rho^\vee \circ j\big)^* W$ is a Laurent polynomial. To do this we will construct a birational map $\mu$ such that the pullback $\chi := (\rho^\vee \circ \mu)^*\Phi$ of $\Phi$ becomes regular, as in the following diagram.
\[
\xymatrix{
& & X^\vee \ar^j[rr] & & T_M \ar^{(\rho^\vee)^*\Phi}[d] \\
T_{\text{Ker}(\chi)} \ar_{\theta := \text{ker}(\chi)}[rr] \ar@{-->}[urr] & & T_M \ar@{-->}[urr]^\mu \ar[rr]_{\chi} & & \CC^l
}
\]

\begin{rem}\label{rem:notation}
Via the bijection between monomials in the variables $x_i$, $1 \leq i \leq R$, with their exponents in $\ZZ^R$ we identify the monomials $(\rho^\vee)^*(x_i)$ with their exponents $\rho_i \in N$.
In this notation:
\[
(\rho^\vee)^*\Phi = \Big(\sum_{i \in S_1}{x^{\rho_i}} , \cdots , \sum_{i \in S_k}{x^{\rho_i}} \Big)
\] 
Recall that the vectors $\rho_i$ generate the rays of the fan $\Sigma$ that defines $Y$.
\end{rem}

We construct our birational map $\mu$ from the data in~\eqref{eq:LG_data} together with a choice of lattice vectors $w_i \in M$ such that:
\begin{equation}
  \label{eq:weight_vectors}
  \begin{minipage}{0.93\linewidth}
    \begin{enumerate}
    \item $\langle w_i , \rho_j \rangle = -1$ for all $j \in S_i$ and all $i$;
    \item $\langle w_i , \rho_j \rangle = 0$ for all $j \in S_l$ such that $l < i$ and all $i$;
    \item $\langle w_i, \rho_j \rangle \geq 0$ for all $j \in S_l$ such that $l > i$ and all $i$.
    \end{enumerate}
  \end{minipage}
\end{equation}
This is exactly Doran--Harder's notion of an \emph{amenable collection subordinate to a nef partition}. 

\begin{dfn}
  A \emph{weight vector} $w \in M$ and a \emph{factor} $F \in \CC[w^\perp]$ together determine a birational transformation $\theta \colon T_M \dashrightarrow T_M$ called an \emph{algebraic mutation}. This is given by the automorphism $x^\gamma \mapsto x^\gamma F^{\langle \gamma,w\rangle}$ of the field of fractions $\CC(N)$ of $\CC[N]$.
\end{dfn}

\noindent We define the birational map $\mu$ as the composition of a sequence of algebraic mutations $\mu_1, \ldots, \mu_k$, where the mutation $\mu_i$ has weight vector $w_i$ and factor given by 
\[
F_i := \frac{\big(\mu_1 \circ \cdots \circ \mu_{i-1}\big)^*\bigg(\sum_{j \in S_i}{x^{\rho_j}}\bigg)}{x^{\rho_{s_i}}}
\]
The conditions~\eqref{eq:weight_vectors} guarantee that $F_i$ is a Laurent polynomial, that $F_i \in \CC[w_i^\perp]$, and that $\big(\mu_1 \circ \cdots \circ \mu_i\big)^* W$ is a Laurent polynomial for all $i \in [k]$.

We can always take the weight vectors $w_i$ in~\eqref{eq:weight_vectors} to be equal to the $-u_i$ from~\S\ref{sec:embedded_degeneration}, but many other choices are possible. We get a toric degeneration in this more general context, too (cf.~\cite{Doran--Harder}):

\begin{lem}
  The lattice vector $w_i \in M$ defines a binomial section of the line bundle $L_i \in \Pic(Y)$.
\end{lem}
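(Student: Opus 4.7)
The plan is to exhibit the binomial section of $L_i$ directly in the Cox ring of $Y$, using $w_i$ to build one of the two monomials.

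First, I would recall that under our hypotheses $Y$ has Cox ring $\CC[x_1,\ldots,x_R]$ graded by $\LL^*\cong\Pic(Y)$ with $\deg(x_j)=D_j$, and that global sections of a line bundle $L$ are the homogeneous polynomials of degree $L$. The line bundle $L_i=\sum_{j\in S_i} D_j$ carries the tautological section $\sigma_0:=\prod_{j\in S_i} x_j$, which is precisely the section already appearing in the proof of the Proposition in~\S\ref{sec:embedded_degeneration}.

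Second, I would use the exact sequence~\eqref{eq:divisor_sequence} to convert $w_i\in M$ into a second degree-$L_i$ monomial. Since $M$ is the kernel of $D\colon(\ZZ^*)^R\to\LL^*$, the $R$-tuple $(\langle w_i,\rho_j\rangle)_{j=1}^R$ satisfies $\sum_j \langle w_i,\rho_j\rangle D_j = 0$ in $\LL^*$. Setting $a_j=1$ if $j\in S_i$ and $a_j=0$ otherwise, the monomial
\[
\sigma_{w_i} := \prod_{j=1}^R x_j^{\langle w_i,\rho_j\rangle + a_j}
\]
therefore has Cox degree $L_i$; geometrically it is the pullback of the character $x^{w_i}$ on the open torus $T_M$ to $Y$, trivialised against $\sigma_0$.

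Third, I would check that each exponent $\langle w_i,\rho_j\rangle + a_j$ is a non-negative integer, so that $\sigma_{w_i}$ really is a monomial section rather than a rational one. Conditions~(i)--(iii) of~\eqref{eq:weight_vectors} handle $\rho_j$ for $j\in\bigcup_l S_l$: the exponent is $-1+1=0$ for $j\in S_i$, $0+0=0$ for $j\in S_l$ with $l<i$, and non-negative for $j\in S_l$ with $l>i$. For $j\in B\cup U$ I would take the canonical choice $w_i=-u_i$ from~\S\ref{sec:embedded_degeneration}: expanding the relation $\rho_b = -\sum_{j>r} m_{b,j}\rho_j$ for $b\in B$ gives $\langle -u_i,\rho_b\rangle = \sum_{j\in S_i} m_{b,j} = l_{b,i}$, which is non-negative by convexity condition~(v) of Definition~\ref{dfn:convex_partition_with_basis}, while $\langle -u_i,\rho_j\rangle = 0$ for $j\in U$ because $j\notin S_i$.

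Finally, the desired binomial section is $\sigma_0 - \sigma_{w_i}$. For $w_i=-u_i$ this recovers the binomial $\prod_{b\in B} x_b^{l_{b,i}} - \prod_{j\in S_i} x_j$ used in the proof of the Proposition in~\S\ref{sec:embedded_degeneration}, confirming that the earlier binomial is precisely the section produced by the amenable-collection machinery. The main obstacle is the non-negativity check on $B\cup U$: conditions~\eqref{eq:weight_vectors} only constrain $w_i$ on $\bigcup_l S_l$, so for a general amenable $w_i$ this non-negativity must be derived either from an implicit additional constraint in the definition of an amenable collection or from the convexity of $L_i$, as in the canonical case $w_i=-u_i$.
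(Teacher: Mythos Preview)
Your approach is the same as the paper's: both split the vector $\rho^\vee(w_i) = (\langle w_i,\rho_j\rangle)_j \in (\ZZ^*)^R$ into its positive and negative parts to obtain two effective monomials of the same Cox degree $L_i$, giving the binomial $\sigma_0 - \sigma_{w_i}$. The paper phrases this as ``the image $\rho^\vee(w_i)$ defines a pair of effective torus-invariant divisors by taking the positive entries and minus the negative entries'', which is exactly your $\sigma_{w_i}$ and $\sigma_0$.

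The gap you flag is real, and it is present in the paper's proof as well. The paper simply asserts that ``the only negative entries are those in $S_i$, which are equal to minus one'', but conditions~\eqref{eq:weight_vectors} constrain $\langle w_i,\rho_j\rangle$ only for $j \in \bigcup_l S_l$ and say nothing about $j \in B \cup U$; so for a general amenable $w_i$ this assertion is unjustified in both arguments. Your verification for the canonical choice $w_i = -u_i$ via $l_{b,i}\ge 0$ is correct, and is the only case in which either proof is actually complete as written.
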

\begin{proof}
The lattice $M$ is the character lattice of the torus $T_N$, and so $w_i$ defines a rational function on $Y$. The image $\rho^\vee(w_i) \in (\ZZ^*)^R$ defines a pair of effective torus invariant divisors by taking the positive entries and minus the negative entries of this vector, written in the standard basis. The only negative entries are those in $S_i$, which are equal to minus one. Both monomials have the same image under $D$, and so they are both in the linear system defined by $L_i$.
\end{proof}

\section*{Acknowledgements}
We thank Alessio Corti and Andrea Petracci for many useful conversations, and Cinzia Casagrande for an extremely helpful observation about the Fano manifold in \S\ref{sec:new_4d}. TC and AK were supported by ERC Starting Investigator Grant 240123. TP was supported by an EPSRC Prize Studentship. TC thanks the University of California at Berkeley for hospitality during the writing of this paper.

\bibliographystyle{plain}
\bibliography{bibliography}
\end{document}